\numberwithin{equation}{section}
\theoremstyle{plain}
\newtheorem{theorem}{Theorem}[section]
\newtheorem{lemma}[theorem]{Lemma}
\newtheorem{question}{Question}
\theoremstyle{definition}
\newtheorem{definition}[theorem]{Definition}
\newtheorem{case[theorem]}{Case}
\theoremstyle{remark}
\newtheorem{remark}[theorem]{Remark}
\numberwithin{equation}{section}
\newcommand{\beql}[1]{\begin{equation}\label{#1}}
\newcommand{\eeq}{\end{equation}}
\newcommand{\comment}[1]{}
\newcommand{\Abs}[1]{{\left|{#1}\right|}}
\newcommand{\Ceil}[1]{{\left\lceil{#1}\right\rceil}}
\newcommand{\Set}[1]{{\left\{{#1}\right\}}}
\newcommand{\RR}{{\mathbb R}}
\newcommand{\ZZ}{{\mathbb Z}}
\newcommand{\NN}{{\mathbb N}}
\newcommand{\QQ}{{\mathbb Q}}
\newcommand{\one}{{\mathds{1}}}
\newcommand{\supp}{{\rm supp\,}}
\newcommand{\vol}{{\rm vol\,}}
\newcommand{\ft}[1]{\widehat{#1}}
\newcommand{\diam}{{\rm diam\,}}
\newcommand{\co}{{\rm co\,}}
\newcounter{rem}
\begin{document}

\title[Functions tiling with several lattices]{Functions tiling with several lattices}

\author{Mihail N. Kolountzakis}
\address{Department of Mathematics and Applied Mathematics, University of Crete, Voutes Campus, 70013 Heraklion, Crete, Greece. \url{https://orcid.org/0000-0001-9598-4580}}
\email{kolount@gmail.com}

\author{Effie Papageorgiou}
\address{Department of Mathematics and Applied Mathematics, University of Crete, Voutes Campus, 70013 Heraklion, Crete, Greece.
\url{https://orcid.org/0000-0001-8475-3226}}
\email{papageoeffie@gmail.com}

\subjclass[2020]{primary 52C22;  secondary 20K99}
\keywords{Lattices, tiling, common fundamental domain, Steinhaus problem}

\date{June 22, 2021}

\thanks{Supported by the Hellenic Foundation for Research and Innovation, Project HFRI-FM17-1733 and by grant No 4725 of the University of Crete.}

\begin{abstract}
We study the problem of finding a function $f$ with ``small support'' that simultaneously tiles with finitely many lattices $\Lambda_1, \ldots, \Lambda_N$ in $d$-dimensional Euclidean spaces. We prove several results, both upper bounds (constructions) and lower bounds on how large this support can and must be. We also study the problem in the setting of finite abelian groups, which turns out to be the most concrete setting. Several open questions are posed.
\end{abstract}

\maketitle

\tableofcontents

\setlength{\parskip}{0.5em}

\section{Introduction to the Steinhaus tiling problem and its variants}

The classical Steinhaus tiling problem concerns tiling by translations only. This is the version of tiling where we have a set (the tile), say in the plane, and we are {\em translating} it around in such a way that every point of the plane is covered exactly once by these translates. In the Steinhaus tiling problem we are seeking a tile that can tile {\em simultaneously} with many different sets of translations. The most important case is: can we find a subset of the plane which can tile (by translations) with all rotates of the integer lattice $\ZZ^2$?

There are two major variations of the Steinhaus problem: the \textit{measurable} and the \textit{set-theoretic} case. In the measurable case we demand our tile to be a Lebesgue measurable subset of $\RR^d$ and we are, at the same time, relaxing our requirements and are allowing a subset of measure 0 of space not to be covered exactly once by the translates of the tile. In the set-theoretic case we allow the tile to by any subset and we typically ask that every point is covered exactly once, allowing no exceptions.

In this paper we are dealing with the measurable Steinhaus tiling problem. Here the tile is not necessarily a set but can be a measurable function. Tiling now means that this function $f:\RR^d \to \RR$ is translated around by a countable set of translates $T \subseteq \RR^d$ and these copies add up to a constant, almost everywhere
$$
\sum_{t \in T} f(x-t) = \mathrm{const.},\ \ \ \text{for almost every $x \in \RR^d$.}
$$
Clearly tiling by a function is a generalization of tiling by a set.

Satisfying the requirements of the Steinhaus tiling problem with a function instead of with a set is generally much easier. The problem becomes interesting only if one asks for further properties that this function should have. Therefore we strive to find a function with {\em small support}, or to prove that the support of such a function must necessarily be large. Asking for $f$ to have a small support goes against $f$ having the ability to tile space, especially with nany different sets of translations $T$. The reason is that for $f$ to tile by translations with $T$ its Fourier transform must contain a rich set of zeros \cite{kolountzakis2004milano}. This set of zeros must be able to support the Fourier transform of the measure $\delta_T = \sum_{t \in T} \delta_t$ (which encodes the set of translations). By the well known uncertainty principle in harmonic analysis a rich set of zeros for $\ft{f}$ usually requires (in various different senses) a large support for $f$ \cite{havin1994uncertainty}.

\subsection{Previous results}

Komj\'ath \cite{komjath1992lattice} answered the Steinhaus question in the affirmative in $\RR^2$ when tiling by all rotates of the set $B=\ZZ\times\Set{0}$
showing that there are such Steinhaus sets (but such a set $A$ cannot be measurable as was shown
recently in \cite{kolountzakis2017measurable}).
Sierpinski \cite{sierpinski1958probleme} showed that a bounded set $A$ which is either closed or open cannot have the
lattice Steinhaus property (that is, intersect all rigid motions of $\ZZ^2$ at exactly one point -- another way to say that $A$ tiles precisely with all rotates of $\ZZ^2$).
Croft \cite{croft1982three} and Beck \cite{beck1989lattice} showed that no bounded and measurable set $A$ can
have the lattice Steinhaus property (but see also \cite{mallinikova1995}).
Kolountzakis \cite{kolountzakis1996problem,kolountzakis1996new} and Kolountzakis and Wolff \cite{kolountzakis1999steinhaus}
proved that any measurable set in the plane that has the measurable Steinhaus property must necessarily
have very slow decay at infinity (any such set must have measure 1). In \cite{kolountzakis1999steinhaus}
it was also shown that there can be no measurable Steinhaus sets in dimension $d\ge 3$ (tiling with all rotates $\rho\ZZ^d$, where $\rho$ is in the full orthogonal group)
a fact that was also shown later by Kolountzakis and Papadimitrakis \cite{kolountzakis2002steinhaus} by a very different method.
See also \cite{chan2007steinhaus,mauldin2003comments,ciucu1996remark,srivastava2005steinhaus}. Kolountzakis \cite{kolountzakis1997multi} looks at the case where
we are only asking for our set to tile with {\em finitely many} lattices, not all rotates as in the original problem, which we are also doing in this paper.
A very important strengthening of some of the results in \cite{kolountzakis1997multi} was given in \cite{han2001lattice}, where it was proved that a common tile for {\em two} lattices in Euclidean space always exists, and a very nice application of this fact was given to constructing a Gabor orthogonal basis for $\RR^d$ for a given lattice $K$ of translations and a given lattice $L$ of modulations, subject only to the necessary condition that $\det{K} \det{L} = 1$.
In a major result Jackson and Mauldin \cite{jackson2002sets,jackson2002lattice} proved the existence of Steinhaus sets in the plane which tile with all rotates of $\ZZ^2$
(not necessarily measurable). Their method does not extend to higher dimension $d \ge 3$. See also \cite{mauldin2001some,jackson2003survey}.
It was also shown in \cite{kolountzakis2017measurable} that a set $A$ which tiles with all rotates of a finite set $B$ cannot be measurable.

\subsection{Structure of this paper}

In \S\ref{sec:lattices} we study the common tiles $f$ for a finite collection of $N$ lattices in $\RR^d$. The emphasis is on the dependence of the size of the common tile on $N$.

In \S\ref{sec:convolution-tiles} we study the diameter of common tiles, a study that had begun in \cite{kolountzakis1999steinhaus}. We show in Theorem \ref{th:long-common-tiles} that there are cases of lattices for which we have a linear lower bound (matching the linear upper bound arising from convolution tiles) for the diameter of the support of any common tile.

In \S\ref{sec:small-volume} we study the volume of the common tiles instead of their diameter. In Theorem \ref{th:volume-lb} we show that convolution tiles with nonnegative convolution factors necessarily have volume $\gtrsim N^d$ for their support. In Theorem \ref{th:1d-lb} we give a simple non-trivial lower bound for the length for a convolution tile of two factors.

Then in \S\ref{sec:many-relations} we turn away from lattices satisfying genericity conditions (e.g. having a direct sum) to lattices which satisfy many algebraic relations. We show that this helps greatly with the diameter and volume of the support of their common tiles (construction in Theorem \ref{th:many-relations} for $d \ge 2$ and in Theorem \ref{th:many-relations-1d} for $d=1$).

In \S\ref{sec:no-measurability} we study common tiles of lattices without the measurability assumption. For two lattices of the same volume in $\RR^d$ these were known to always exist \cite{kolountzakis1997multi}. We show in Theorem \ref{th:equal-volume} that the condition of equal volume is actually necessary for the existence of a common fundamental domain, which had remained an open question in \cite{kolountzakis1997multi}.

In \S\ref{sec:finite-groups} we study the problem of finding a common tiling function for subgroups of a finite abelian group, trying now to minimize the size (cardinality) of its support.

In Theorem \ref{th:no-intersection} we show that we can reduce the problem to the case where the two subgroups have no intersection, i.e. we reduce the study to direct products of groups. Theorem \ref{th:multiple} solves the problem exactly in the special case when the size of one group divides the size of the other. We then connect the problem to the problem of the support of {\em copulas} (a generalization of doubly stochastic matrices to the non-square case) as they have been studied in combinatorics and statistics and we deal with some special cases in Lemma \ref{lm:r}\footnote{This problem has since been solved completely in \cite{loukaki2022doubly}}.

Several open questions are posed throughout.

\section{The diameter and volume of soft multi-lattice tiles}\label{sec:lattices}

\subsection{Convolution tiles and their diameter}\label{sec:convolution-tiles}
It has long been known \cite{beck1989lattice,kolountzakis1996problem} that for a function $f \in L^1(\RR^d)$ to tile with all rotates of $\ZZ^d$ it is necessary and sufficient that $\ft{f}$ vanishes on all spheres centered at the origin that contain any integer lattice point. This easily implies that for $d \ge 2$ any such function must have unbounded support and even more  quantitative lower bounds for the the rate of decay of $f$ near infinity \cite{kolountzakis1996problem,kolountzakis1996new,kolountzakis1999steinhaus}.

This is no longer true if one restricts the number of rotates of $\ZZ^d$ that we demand $f$ tiles with. It makes sense to generalize the question and ask for a function $f$ which tiles with a given set of lattices $\Lambda_i \subseteq \RR$, $i=1, 2, \ldots, N$, and such that
$$
\diam\supp f
$$
is small. We remind \cite{kolountzakis1999steinhaus} that for $f$ to tile with the lattices $\Lambda_i$ it is necessary and sufficient for $\ft{f}$ to vanish on the dual lattices $\Lambda_i^*$, except at 0. (The dual lattice of a lattice $\Lambda = A \ZZ^d$, width $A$ being a non-singular $d \times d$ matrix, is the lattice $\Lambda^* = A^{-\top}\ZZ^d$.)

Therefore we have to find a function $f$, with support of small diameter, such that $\ft{f}$ is 0 on each $\Lambda_i^*\setminus\Set{0}$.
The first thing that comes to mind is to take $f$ to be a convolution. It takes a moment to verify that if $f$ tiles with a set of translates $T$ then so does $g*f$ for any $g \in L^1(\RR^d)$. One can either verify this by checking the definition of tiling for $g*f$ or observe that tiling is a condition that can be checked on the Fourier side \cite{kolountzakis2004milano} and $\ft{g*f} = \ft{g} \cdot \ft{f}$ has an even richer set of zeros that $\ft{f}$.

So, since $\ft{f}$ has to vanish on the dual lattices $\Lambda_i^*\setminus\Set{0}$ we can take
\beql{convolution}
f = \one_{D_1} * \one_{D_2} * \cdots * \one_{D_N},
\eeq
where $D_i$ is a fundamental parallelepiped of $\Lambda_i$. Since $D_i + \Lambda_i$ is a tiling it follows that $\widehat{\one_{D_i}}$ vanishes on $\Lambda_i^*\setminus\Set{0}$ and that $f$ vanishes on their union and hence tiles with all $\Lambda_i$. This can be slightly generalized by taking, instead if the indicator functions $\one_{D_i}$ any function $f_i$ that tiles with $\Lambda_i$
\beql{convolution-of-functions}
f = f_1 * f_2 * \cdots f_N.
\eeq

The following observation was already made in \cite{kolountzakis1999steinhaus} in the case $f_i = \one_{D_i}$.
\begin{theorem}\label{th:convolution-lb}
If $\Lambda_1, \ldots, \Lambda_N$ are lattices in $\RR^d$ of volume $c_1 \le \vol\Lambda_i$, $f_i$ tiles with $\Lambda_i$ at some non-zero level and $f = f_1 * f_2 * \cdots * f_N$ then
\beql{convolution-lower-bound}
\diam\supp f \ge C_d N.
\eeq
\end{theorem}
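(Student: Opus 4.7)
My plan is to reduce the diameter bound to a volume bound via the isodiametric inequality, and to obtain the volume bound by combining the Brunn--Minkowski inequality with a volume lower bound on each $\supp f_i$ coming from the tiling hypothesis.

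First, I would show that $\vol(\supp f_i) \ge \vol(\Lambda_i) \ge c_1$ for every $i$. Since $f_i$ tiles with $\Lambda_i$, the identity $\sum_{\lambda\in\Lambda_i} f_i(x-\lambda) = c_i$ a.e., with tiling constant $c_i = (\int f_i)/\vol\Lambda_i$, forces (when $c_i\neq 0$) the set $\supp f_i + \Lambda_i$ to cover $\RR^d$ up to a null set. Counting multiplicities of this cover over a fundamental domain of $\Lambda_i$ then yields $\vol(\supp f_i) \ge \vol(\Lambda_i)$; in the canonical case $f_i = \one_{D_i}$ this is just $\vol D_i = \vol\Lambda_i$.

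Next, I would use that for nonnegative $f_i$ there is no cancellation in the convolution, so
\[
\supp f \;=\; \overline{\supp f_1 + \cdots + \supp f_N}.
\]
Iterating Brunn--Minkowski then gives
\[
\vol(\supp f)^{1/d} \;\ge\; \sum_{i=1}^N \vol(\supp f_i)^{1/d} \;\ge\; N c_1^{1/d},
\]
so $\vol(\supp f) \ge c_1 N^d$. The isodiametric inequality finally yields $\diam\supp f \ge 2\omega_d^{-1/d}(\vol\supp f)^{1/d} \ge C_d N$ with $C_d = 2\omega_d^{-1/d} c_1^{1/d}$.

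The hard part, if one wants to drop the nonnegativity assumption, will be that for signed $f_i$ cancellation may shrink $\supp f$ strictly inside the Minkowski sum, leaving only the wrong-direction inclusion $\supp f \subseteq \overline{\sum_i \supp f_i}$, which does not yield a diameter lower bound on $\supp f$ itself. A proof in that generality would likely require a Fourier-analytic uncertainty principle exploiting that $\ft{f}$ vanishes on $\bigcup_i \Lambda_i^*\setminus\{0\}$. For the indicator construction \eqref{convolution} and its nonnegative generalization \eqref{convolution-of-functions} treated in \cite{kolountzakis1999steinhaus}, however, the plan above is sufficient.
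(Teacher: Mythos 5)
Your argument is correct as far as it goes, but it proves a weaker statement than the one claimed: Theorem \ref{th:convolution-lb} (and the construction \eqref{convolution-of-functions} it refers to) does not assume the factors $f_i$ are nonnegative, and your proof hinges on the identity $\supp f = \overline{\supp f_1 + \cdots + \supp f_N}$, which is exactly what fails for signed factors. The missing idea, and the one the paper uses, is the Titchmarsh convolution theorem: for arbitrary compactly supported integrable functions one still has $\co\supp(g*h) = \co\supp g + \co\supp h$, i.e.\ equality at the level of convex hulls, and convex hulls are all one needs for a diameter bound since $\diam\co E = \diam E$. So the ``hard part'' you defer to a Fourier-analytic uncertainty principle is in fact handled by a standard, purely support-theoretic tool; no positivity and no uncertainty principle are needed. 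With Titchmarsh in hand, the paper argues as follows: each $f_i$ tiles with $\Lambda_i$ (with nonzero constant), so $\supp f_i$ meets almost every coset of $\Lambda_i$, hence $\diam\supp f_i$ is bounded below by a dimensional constant (this replaces your volume bound $\vol\supp f_i \ge c_1$); by pigeonhole some coordinate axis sees at least $N/d$ factors whose projections have diameter $\ge a_d$, and since one-dimensional diameters add under Minkowski sums of the projected convex hulls, $\diam\supp f \ge (a_d/d)N$.

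For the nonnegative case your route (cover the fundamental domain to get $\vol\supp f_i \ge \vol\Lambda_i$, then Brunn--Minkowski, then the isodiametric inequality) is perfectly valid, and in fact it proves the stronger volume bound $\Abs{\supp f}\gtrsim N^d$, which is precisely the paper's Theorem \ref{th:volume-lb}; deducing \eqref{convolution-lower-bound} from it via the isodiametric inequality is a nice observation. But as a proof of Theorem \ref{th:convolution-lb} in the stated generality it has a genuine gap, and note also that your reading of \eqref{convolution-of-functions} as a ``nonnegative generalization'' is not what the paper intends: there the $f_i$ are arbitrary $L^1$ tiles of $\Lambda_i$. Finally, both your argument and the paper's implicitly need the tiling constants (equivalently $\int f_i$) to be nonzero, as you correctly flagged.
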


\begin{proof}
By the pigeonhole principle on can pick a coordinate axis, say the first one, such that at least $N/d$ of the functions $f_i$ have $\supp f_i$ project onto the first coordinate axis on a set of diameter $\ge a_d$, where $a_d$ is a constant that depends on $d$ only. For if not then we would be able to find a $f_i$ whose support is contained in a cube of arbirarily small side. Since $f_i$ tiles with $\Lambda_i$ its support has to contain at least one element from almost all of the cosets of $\Lambda_i$ in $\RR^d$. This contradicts our assumed lower bound on $\vol\Lambda_i$.

The Tichmarsh convolution theorem \cite{donoghue2014distributions} says that
$$
\co\supp(A*B) = \co\supp A + \co\supp B,
$$
where $\co$ denotes the convex hull of a set and $A, B$ are two arbitrary integrable functions of compact support.

If we write $\phi$ for the convolution of those $f_i$ that we collected in the first paragraph of this proof and $\psi$ for the remaining $f_i$ then $f = \phi*\psi$ and
$$
\co\supp f = \co\supp \phi + \co\supp \psi,
$$
which implies that
\beql{diamsuppf}
\diam\supp f = \diam\co\supp f \ge \diam\co\supp \phi \ge \diam \pi_1 \co \supp \phi,
\eeq
where $\pi_1$ denotes projection onto the first coordinate axis.

Again by the Titchmarsh convolution theorem, $\pi_1\co\supp\phi$ is the sum of the $\pi_1\co\supp f_i$ for those $f_i$ that participate in the definition of $\phi$ and for these we know that
$$
\diam\pi_1\co\supp f_i \ge \diam \pi_1 \supp f_i \ge a_d.
$$
But for any two one-dimensional sets $E, F \subseteq \RR$ we have $\diam(E+F) = \diam E + \diam F$, which implies, using \eqref{diamsuppf}, that
$$
\diam\supp f \ge \frac{a_d}{d} N,
$$
as we had to prove.

\end{proof}


If the lattices $\Lambda_i$ satisfy some ``roundness'' assumption, e.g. if each $\Lambda_i$ is assumed to have a fundamental domain of diameter bounded independent of $N$ (as in the important case when all the lattices are rotates of $\ZZ^d$), then the convolution tile \eqref{convolution} clearly has diameter which is also at most $C\cdot N$.

On the other hand we have the following rather general lower bound for the diameter of the support \cite{kolountzakis1999steinhaus} assuming only a certain genericity assumption \eqref{no-intersection} on the $\Lambda_i$.
\begin{theorem}\label{kw-lb}
If $\Lambda_1, \ldots, \Lambda_N \subseteq \RR^d$, $d \ge 1$, are lattices of volume equal to 1 such that
\beql{no-intersection}
\Lambda_i \cap \Lambda_j = \Set{0}\ \  \text{ for all $i \neq j$,}
\eeq
then if $f$ tiles with all these lattices we have
\beql{lb}
\diam\supp f \ge C_d N^{1/d}.
\eeq
\end{theorem}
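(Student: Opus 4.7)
The plan is to work on the Fourier side, starting from the equivalent formulation of the tiling hypothesis: $\ft{f}$ must vanish on $\bigcup_{i=1}^N \Lambda_i^* \setminus \{0\}$. After translating so that $\supp f \subseteq B(0, R/2)$ with $R = \diam \supp f$, and normalizing $\int f = 1$, I would pass to the autocorrelation $g := f \ast \widetilde{f}$ with $\widetilde{f}(x) = \overline{f(-x)}$. Then $g$ is supported in a ball of radius at most $R$, has $\ft{g} = \abs{\ft{f}}^2 \ge 0$ with $\ft{g}(0) = 1$, and still tiles with each $\Lambda_i$ with tiling constant $1$, since convolution preserves the tiling property and $\ft{g}$ vanishes on the same set as $\ft{f}$.

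Applying Poisson summation for $g$ on each $\Lambda_i$ (covolume $1$) and using that $\ft{g}$ vanishes on $\Lambda_i^* \setminus \{0\}$ gives $\sum_{\lambda \in \Lambda_i} g(\lambda) = \ft{g}(0) = 1$ for every $i$. Summing these $N$ identities and exploiting the disjointness hypothesis $\Lambda_i \cap \Lambda_j = \{0\}$---which ensures that each nonzero element of $\bigcup_i \Lambda_i$ belongs to exactly one of the lattices---produces the key identity
\[
N(1 - g(0)) = \sum_{\lambda \in (\bigcup_i \Lambda_i) \setminus \{0\}} g(\lambda).
\]
Since $\supp g \subseteq B(0, R)$, only nonzero lattice points inside $B(0, R)$ contribute, and their total count is at most $\sum_i \#(\Lambda_i \cap B(0, R)) \le C_d N R^d$ by standard lattice point counting.

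To extract the bound $R \ge C_d N^{1/d}$ one must combine this identity with size estimates on $g$. On the one hand, Cauchy--Schwarz applied to $\int g = 1$ on the compactly supported $g$ yields $g(0) = \int \abs{f}^2 \ge 1/\vol \supp f \gtrsim R^{-d}$; on the other, the positive definiteness of $g$ coming from $\ft{g} \ge 0$ gives the pointwise bound $\abs{g(\lambda)} \le g(0)$. The main obstacle is that these two ingredients alone combine to give only $g(0) \gtrsim 1/(1 + R^d)$, in which the factor $N$ cancels on both sides of the identity and no useful lower bound on $R$ emerges. I would expect the successful argument to replace the crude pointwise bound by a Cauchy--Schwarz on the lattice-point sum, combined with a Plancherel-type identity $\sum_{\lambda \in \Lambda_i} g(\lambda)^2 = \sum_{\mu \in \Lambda_i^*} (\ft{g} \ast \ft{g})(\mu)$ to control both factors, effectively balancing the count of zeros of $\ft{g}$ forced by $\bigcup_i \Lambda_i^*$ against the band-limited Bernstein density $\lesssim R^d$ of a function of exponential type $\lesssim R$. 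This last step---converting the primal disjointness hypothesis into an effective density estimate on the Fourier side---is the technical heart of the proof and is in the spirit of the arguments in \cite{kolountzakis1999steinhaus}.
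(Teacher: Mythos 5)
Your proposal does not prove the theorem, and you essentially say so yourself: after the standard reductions (passing to the autocorrelation $g=f*\tilde f$ with $\widehat g=|\widehat f|^2\ge 0$, Poisson summation on each $\Lambda_i$, the identity $N(1-g(0))=\sum_{\lambda\in(\bigcup_i\Lambda_i)\setminus\{0\}}g(\lambda)$, and the bounds $|g(\lambda)|\le g(0)$ and $g(0)\gtrsim R^{-d}$), the factor $N$ cancels and no lower bound on $R$ comes out. The final paragraph, where the ``technical heart'' is supposed to live, is only a conjecture about what might work (Cauchy--Schwarz on the lattice sum plus a Plancherel-type identity for $g^2$); it is not carried out, and it is not shown that it overcomes precisely the cancellation you identified, since so far the hypothesis $\Lambda_i\cap\Lambda_j=\{0\}$ enters only through the distinctness of the nonzero lattice points, which is the input that cancelled. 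Note also that the paper under review contains no proof of Theorem~\ref{kw-lb}: it is imported from \cite{kolountzakis1999steinhaus}, so what is missing from your write-up is exactly the argument of that paper, not a routine estimate that can be waved at.

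Two secondary points. First, the ``standard lattice point counting'' bound $\#(\Lambda_i\cap B(0,R))\le C_dR^d$ is false for a general covolume-one lattice (in $\mathbb{R}^2$, the lattice $\epsilon\mathbb{Z}\times\epsilon^{-1}\mathbb{Z}$ has about $2R/\epsilon$ points in $B(0,R)$); it becomes legitimate only after observing that tiling by a function supported in a ball of radius $R/2$ forces the covering radius, hence all successive minima, of each $\Lambda_i$ to be $O(R)$, and then invoking Minkowski's second theorem. Second, for $f$ merely in $L^1$ the pointwise values $g(\lambda)$ and the use of Poisson summation require a mollification (Fej\'er) argument, which should at least be mentioned. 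Both of these are repairable; the genuine gap is that the step from your summed Poisson identity to $\diam\supp f\ge C_dN^{1/d}$ --- the entire content of the theorem --- is absent.
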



The main question is therefore:
\begin{question}
Can the gap between the lower bound \eqref{lb} and the linear upper bound $O(N)$ achievable by the convolution tile \eqref{convolution} (in the case of ``round'' lattices, having fundamental domains bounded in diameter by a constant) be bridged?

Are there examples of lattices $\Lambda_i$, $i=1,2,\ldots,N$, satisfying \eqref{no-intersection} and a non-zero function $f \in L^1(\RR^d)$ that tiles with all $\Lambda_i$ and such that
$$
\diam\supp f = o(N)?
$$

In other words, do there exist collections of lattices for which a common tile $f$ can be found which is diameter-wise more efficient than the convolution construction \eqref{convolution}?
\end{question}

Our next result is that for some collections of lattices the linear upper bound cannot be improved.
The lattices given are both ``round'' (have a fundamental domain bounded independent of $N$) and satisfy the genericity assumption \eqref{no-intersection}. There are however collinearities so, in some sense, this is not a generic situtation. 
\begin{theorem}\label{th:long-common-tiles}
For $d \ge 1$ and for each $N$ there are lattices $\Lambda_1, \ldots, \Lambda_N \subseteq \RR^d$, of volume 1, such that if $f \in L^1(\RR^d)$, $\int f \neq 0$, tiles with all of them then
$$
\diam\supp f \ge C_d N.
$$
\end{theorem}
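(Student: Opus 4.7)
The plan is to construct, for $d\ge 2$, a family of $N$ volume-$1$ lattices whose dual lattices collectively pack $\asymp N$ zeros per unit length on a common line through the origin; the Fourier characterization of tiling then forces $\ft{f}$ to vanish on a set of linear density $\asymp N$ along that line, and a Paley--Wiener / Jensen argument converts this into a lower bound of order $N$ for the diameter of $\supp f$ in that direction.

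Explicitly, I would choose $\alpha_1,\ldots,\alpha_N\in[1,2]$ with all pairwise ratios $\alpha_i/\alpha_j$ irrational, and take
$$
\Lambda_i \;=\; \alpha_i^{-1}\ZZ e_1 \,\oplus\, \alpha_i\ZZ e_2 \,\oplus\, \ZZ^{d-2}.
$$
Each $\Lambda_i$ has volume $1$ and fundamental parallelepiped of diameter bounded independently of $N$, so the family is ``round''. Its dual is
$$
\Lambda_i^* \;=\; \alpha_i\ZZ e_1 \,\oplus\, \alpha_i^{-1}\ZZ e_2 \,\oplus\, \ZZ^{d-2},
$$
and $\Lambda_i^*\cap(\RR e_1) = \alpha_i\ZZ e_1$. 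Irrationality of the ratios makes the sets $\alpha_i\ZZ\setminus\{0\}$ pairwise disjoint in $\RR$, so their union $E:=\bigcup_i(\alpha_i\ZZ\setminus\{0\})$ satisfies $\#(E\cap[-R,R])\ge \tfrac12 NR$ for every $R\ge 4$, uniformly in $N$. (The case $d=1$ is degenerate since the only covolume-$1$ lattice in $\RR$ is $\ZZ$; one either loosens ``volume $1$'' to ``volume in $[1,2]$'' and takes $\Lambda_i=\alpha_i\ZZ$ directly, or handles $d=1$ by the same Jensen argument applied to $\ft f$ itself.)

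Now suppose $f\in L^1(\RR^d)$ with $\int f\ne 0$ tiles with every $\Lambda_i$. We may assume $\supp f$ is compact, as otherwise the conclusion is vacuous. Consider the marginal $g(x_1):=\int_{\RR^{d-1}} f(x_1,x_2,\ldots,x_d)\,dx_2\cdots dx_d$; it lies in $L^1(\RR)$, its support is contained in the projection of $\supp f$ onto the first axis, and $\ft{g}(\xi_1)=\ft{f}(\xi_1,0,\ldots,0)$. By Paley--Wiener, the entire extension $F(z):=\ft{g}(z)$ has exponential type $\tau\le\pi\diam\supp f$, while $F(0)=\int f\ne 0$ ensures $F\not\equiv 0$. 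The tiling hypothesis together with the Fourier criterion forces $F$ to vanish on all of $E$.

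It remains to show $\tau\ge C_d N$. Jensen's formula gives, for $R>0$,
$$
\int_0^R\frac{n(r)}{r}\,dr \;\le\; \tau R+O(1),
$$
where $n(r)$ is the number of zeros of $F$ in $|z|\le r$. Plugging in $n(r)\ge \tfrac12 Nr$ for $r\in[4,R]$ yields $\tfrac12 N(R-4)\le \tau R+O(1)$; letting $R\to\infty$ produces $\tau\ge N/2$, hence $\diam\supp f\ge\tau/\pi \ge C_dN$. The Jensen step above is standard; the main conceptual input is the construction itself, which must simultaneously make the $\Lambda_i$ round of volume $1$ and force a \emph{linear} density of collinear zeros on $\ft{f}$. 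The role of the hypothesis $\int f\ne 0$ is exactly to rule out the degenerate case $F\equiv 0$ that would otherwise undermine the Jensen bound; without it one could only exclude $\ft f$ being identically zero on the first axis by an additional argument.
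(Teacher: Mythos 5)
Your proposal is correct and follows essentially the same route as the paper: choose lattices whose duals meet a fixed coordinate axis in pairwise disjoint arithmetic progressions of step $\asymp 1$ (irrational ratios ensure disjointness), so that the one-dimensional marginal of $f$ (nontrivial because $\int f \neq 0$) has Fourier transform vanishing with linear density $\asymp N$ on that axis, and a Paley--Wiener/Jensen argument then gives $\diam\supp f \gtrsim_d N$. Your side remark that $d=1$ is degenerate under the literal volume-$1$ hypothesis (only $\ZZ$ qualifies) is a fair observation which the paper glosses over by asserting the case $d=1$ is ``even easier''.
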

\begin{proof}
We give the proof in the case $d=2$. It works with obvious changes in all dimensions $d > 2$ and it is even easier in dimension $d=1$.

Take $\Lambda_i^*$ to be generated by the two vectors
$$
u_i = (0, a_i), v_i = (1/a_i, 0),
$$
where the numbers $a_1, \ldots, a_N$ are linearly independent over $\QQ$ and
$$
0.9 < a_i < 1.
$$
If $f$ tiles with all $\Lambda_i$ then $\ft{f}$ vanishes on all points of the form
$$
(0, k \cdot a_i),\ \ i=1, 2, \ldots, N,\ \ k \in \ZZ\setminus\Set{0}.
$$
Since all these points are different it follows that the density of zeros on the $y$-axis is $\ge C \cdot N$. This implies that
$$
\diam\supp \pi_2(f) \ge C \cdot N
$$
(say, by Jensen's formula) where $\pi_2(f)$ is the one-variable function
$$
\pi_2(f)(y) = \int_{\RR} f(x, y) \,dx.
$$
(This is not an identically zero function by our assumption on the integral of $f$.)
This is turn implies
$$
\diam\supp f \ge C \cdot N.
$$

\end{proof}

\begin{remark}
Let us point out here that, on the real line at least, the following trick can shorten the diameter of a convolution tile by 1, when tiling simultaneously by two integer lattices. For instance, suppose we want to find a common tile in $\RR$ for the lattices $\Lambda_1 = m\ZZ$ and $\Lambda_2 = n\ZZ$, with $m, n \in \NN$. If we take the convolution of the individual tiles $\one_{[0, m]}$ and $\one_{[0, n]}$ then we obtain the commong tile
$$
f_1 = \one_{[0, m]} * \one_{[0, n]},
$$
which has diameter $m+n$.

But we can also work first in $\ZZ$ and then extend to $\RR$: the functions on $\ZZ$
$$
\one_\Set{0, 1, \ldots, m-1} \text{ and } \one_\Set{0, 1, \ldots, n-1}
$$
tile $\ZZ$ with the lattices $\Lambda_1, \Lambda_2 \subseteq \ZZ$ respectively. Taking their convolution in $\ZZ$
$$
g = \one_\Set{0, 1, \ldots, m-1} * \one_\Set{0, 1, \ldots, n-1}
$$
we obtain a function on $\ZZ$ that tiles $\ZZ$ with both $\Lambda_1$ and $\Lambda_2$ and 
has
$$
\supp g = \Set{0, 1, \ldots, m+n-2}.
$$
Defining now (with a slight abuse of notation)
$$
f_2 = g * \one_{[0, 1]}
$$
we obtain a function $f_2$ on $\RR$ that tiles $\RR$ with both $\Lambda_1$ and $\Lambda_2$ and has support
$$
\supp f_2 = [0, m+n-1],
$$
thus improving by 1 on the diameter of $f_1$.
\end{remark}

\subsection{Common tiles whose support has small volume}\label{sec:small-volume}
Another measure of smallness of the support is its volume. Can we construct a common tile $f$ for the lattices $\Lambda_i$ such that $\Abs{\supp f}$ is small?

In the case of $f$ given by \eqref{convolution} it is clear that
$$
\supp{f} = D_1 + D_2 + \ldots + D_N.
$$
To keep things concrete let us assume that all $\Abs{D_i} = 1$ in \eqref{convolution} (unimodular lattices). Then the Brunn-Minkowski inequality \cite{gardner2002brunn} says that
$$
\Abs{\supp f} = \Abs{D_1+\cdots+D_N} \ge \left( \Abs{D_1}^{1/d}+\cdots+\Abs{D_N}^{1/d}\right)^d \ge N^d.
$$
This lower bound
$$
\Abs{\supp f} \ge C N^d
$$
clearly holds also for functions of the form
\beql{convolution-soft}
f = f_1 * f_2 * \cdots * f_N,\ \ \ f_i \ge 0,
\eeq
where for all $i=1,2,\ldots,N$ we assume that the \textit{nonnegative} function $f$ tiles with $\Lambda_i$.

We have proved:
\begin{theorem}\label{th:volume-lb}
For any collection of lattices $\Lambda_1, \ldots, \Lambda_N$ in $\RR^d$ of volume at least 1 and any common tile $f$ for them of the form
$$
f = f_1 * f_2 * \cdots * f_N,\ \ \ f_i \ge 0,
$$
with $f_i$ tiling with $\Lambda_i$, we have
$$
\Abs{\supp f} \ge N^d.
$$
\end{theorem}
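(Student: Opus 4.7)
The plan is essentially to replace the fundamental domains $D_i$ in the sketch preceding the theorem with the supports of the nonnegative factors $f_i$, and then to run the same Brunn--Minkowski argument. The two substantive steps to justify are (a) a lower bound $\Abs{\supp f_i}\ge 1$ for each factor, and (b) the identification of $\supp f$ with the Minkowski sum of the $\supp f_i$.

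For step (a), since $f_i\ge 0$ tiles with $\Lambda_i$, we have $\sum_{\lambda\in\Lambda_i}f_i(x-\lambda)=c_i$ almost everywhere, with $c_i>0$ unless $f_i\equiv 0$ (a trivial case we may exclude, since otherwise $f\equiv 0$ and the statement is vacuous). Nonnegativity forces the translates of $\supp f_i$ by $\Lambda_i$ to cover $\RR^d$ up to a null set; equivalently, the projection of $\supp f_i$ onto $\RR^d/\Lambda_i$ has full measure. Hence $\Abs{\supp f_i}\ge \vol\Lambda_i\ge 1$.

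For step (b), a standard fact about convolutions of nonnegative $L^1$ functions is that
$$
\supp(f_i*f_j)=\overline{\supp f_i+\supp f_j},
$$
since if $x\in\supp f_i+\supp f_j$ then in a neighborhood of $x$ the integrand defining the convolution is nonnegative and positive on a set of positive measure. Iterating,
$$
\supp f=\overline{\supp f_1+\supp f_2+\cdots+\supp f_N}.
$$
(For this we only need compactness of each $\supp f_i$, which is given implicitly since we are discussing the diameter/volume of these supports; in any event, one may first assume compact support and then remove the assumption by truncation.)

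Now apply the Brunn--Minkowski inequality for measurable sets in the form $\Abs{A_1+\cdots+A_N}^{1/d}\ge\sum_{i=1}^N\Abs{A_i}^{1/d}$ to $A_i=\supp f_i$. Combined with (a), this yields
$$
\Abs{\supp f}^{1/d}\ge\sum_{i=1}^N\Abs{\supp f_i}^{1/d}\ge N,
$$
so $\Abs{\supp f}\ge N^d$, as claimed. I do not anticipate a real obstacle; the only point requiring care is the nonnegativity hypothesis, which is used crucially both in passing from ``tiles with $\Lambda_i$'' to ``$\Abs{\supp f_i}\ge\vol\Lambda_i$'' and in identifying $\supp f$ with the Minkowski sum (for signed factors, cancellation could shrink $\supp f$, and only the weaker Titchmarsh identity for convex hulls would be available).
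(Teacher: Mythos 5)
Your proof is correct and follows essentially the same route as the paper: identify $\supp f$ with the (closure of the) Minkowski sum $\supp f_1+\cdots+\supp f_N$ using nonnegativity, note $\Abs{\supp f_i}\ge\vol\Lambda_i\ge 1$ from the tiling, and apply Brunn--Minkowski. You simply spell out the details that the paper leaves implicit in its remark that the bound ``clearly holds also'' for nonnegative convolution factors.
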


But when the functions $f$ are signed (or complex) we only have
$$
\supp{f} \subseteq \supp{f_1}+\cdots\supp{f_N},
$$
not necessarily equality, which brings us to the next question.
\begin{question}
If $f$ is given by \eqref{convolution-soft}, is it true that
\beql{volume-lb}
\Abs{\supp f} \ge C N^d?
\eeq
\end{question}

If one requires that the lattices $\Lambda_1, \Lambda_2, ..., \Lambda_N\subset \mathbb{R}^d$ have the same volume, say $1$, and the sum $\Lambda_1^{\ast}+ \Lambda_2^{\ast}+ ...+ \Lambda_N^{\ast}$ of their dual lattices is direct, then, by \cite[Theorem 2]{kolountzakis1997multi}, they possess a measurable common almost fundamental domain $E$ (generally unbounded). In this case, $|E|=\vol(\Lambda_i)=1$. So then one can take $f=\one_E$, which tiles with all $\Lambda_i$, $i=1,2, ..., N$, with $|\supp f|=|E|=1$.

Motivated by the previous observation, but now dropping the equal volume assumption, we ask the following:

\begin{question} Consider the lattices $\Lambda_1, \Lambda_2, ..., \Lambda_N$, with $\frac{1}{2}\leq \vol(\Lambda_i)\leq 2$. Is there a function $f$ that tiles with all $\Lambda_i$, such that
	\[|\supp f|=o(N^d)?
	\]
\end{question}

In the simplest case in dimension $d=1$, and for two lattices only, a basic question seems to be to ask if the convolution \eqref{convolution-soft} is best in terms of the length of the support. Here we can offer a simple lower bound assuming a nonnegative function.

\begin{theorem}\label{th:1d-lb}
Suppose the nonnegative $f:\RR\to\RR^{\ge 0}$ is measurable and tiles with both $\Lambda_1 = \ZZ$ and with $\Lambda_2 = \alpha \ZZ$, where $\alpha \in (0, 1)$:
\beql{tiling-with-two}
\sum_{n \in \ZZ} f(x-n) = 1,\ \ \ \sum_{n \in \ZZ} f(x-n\alpha) = \frac{1}{\alpha},\ \ \text{for almost every $x \in \RR$}.
\eeq
Then
\beql{1d-lb}
\Abs{\supp f} \ge \Ceil{\frac{1}{\alpha}} \alpha \ge 2\alpha.
\eeq
\end{theorem}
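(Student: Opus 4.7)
The plan is to extract a pointwise upper bound on $f$ from the first tiling relation and then combine it with the second tiling relation in a counting argument, unfolded over a fundamental domain of $\alpha\ZZ$.

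\textbf{Step 1 (pointwise bound).} The $\ZZ$-tiling relation $\sum_{n\in\ZZ} f(x-n)=1$ together with $f\ge 0$ forces $f\le 1$ almost everywhere: if $f(y_0)>1$ on a set of positive measure, then choosing any $n\in\ZZ$ and taking $x=y_0+n$, the single term $f(x-n)=f(y_0)>1$ already makes the sum exceed $1$, contradicting the tiling almost everywhere.

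\textbf{Step 2 (counting nonzero terms).} Apply the bound $f\le 1$ to the $\alpha\ZZ$-tiling relation. For almost every $x$,
\[
\frac{1}{\alpha}=\sum_{n\in\ZZ}f(x-n\alpha)\le \#\{n\in\ZZ:f(x-n\alpha)>0\},
\]
and since the right hand side is an integer, it is at least $\lceil 1/\alpha\rceil$. Writing $N(x)=\sum_{n\in\ZZ}\one_{\supp f}(x-n\alpha)$, this reads $N(x)\ge\lceil 1/\alpha\rceil$ for a.e.\ $x$.

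\textbf{Step 3 (unfolding).} Integrate $N$ over a fundamental domain of $\alpha\ZZ$, namely $[0,\alpha)$. By Tonelli, since the translates $[-n\alpha,(1-n)\alpha)$ partition $\RR$,
\[
\int_0^\alpha N(x)\,dx=\sum_{n\in\ZZ}\int_0^\alpha \one_{\supp f}(x-n\alpha)\,dx=\sum_{n\in\ZZ}\bigl|\supp f\cap[-n\alpha,(1-n)\alpha)\bigr|=|\supp f|.
\]

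\textbf{Step 4 (conclusion).} Combining Steps 2 and 3,
\[
|\supp f|=\int_0^\alpha N(x)\,dx\ge \alpha\,\Ceil{\tfrac{1}{\alpha}},
\]
and because $\alpha\in(0,1)$ gives $1/\alpha>1$ and hence $\lceil 1/\alpha\rceil\ge 2$, the second inequality in \eqref{1d-lb} is immediate. No step looks like a real obstacle; the only mildly delicate point is ensuring that the almost-everywhere bound $f\le 1$ from Step 1 is genuinely pointwise-on-a-full-measure-set, so that the counting inequality in Step 2 holds for a.e.\ $x$ and survives the Tonelli unfolding in Step 3.
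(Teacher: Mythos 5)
Your proposal is correct and follows essentially the same route as the paper: the $\ZZ$-tiling forces $f\le 1$ a.e., so the $\alpha\ZZ$-tiling requires at least $\Ceil{1/\alpha}$ nonzero translates at a.e.\ point, and summing this over the fundamental domain $[0,\alpha)$ of $\alpha\ZZ$ gives $\Abs{\supp f}\ge \alpha\Ceil{1/\alpha}\ge 2\alpha$. Your Step 3 merely makes explicit, via Tonelli, the unfolding that the paper states in one line (``using this for almost all $x\in[0,\alpha)$''), and your noted delicacy about upgrading $f\le 1$ a.e.\ to all translates $x-n\alpha$ simultaneously is handled by a countable union of null sets.
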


\begin{remark}
If we assume the first equation in \eqref{tiling-with-two} then the constant in the second equation is forced to be $1/\alpha$. This is because $\int f = 1$ (from the first equation), so repeating $f$ at a set of translates of density $1/\alpha$ will give a constant (assuming it tiles) at that level.
\end{remark}

\begin{remark}
Notice that if $\alpha$ is just a little less than 1 then \eqref{1d-lb} gives a lower bound of $2\alpha$, which shows that the convolution $\one_{[0, 1]}*\one_{[0, \alpha]}$ is almost optimal in this case, having support of size $1+\alpha$.

But if, on the other hand, $\alpha$ is just over $1/2$ then the lower bound is just over 1 but the convolution upper bound is just over $3/2$, a considerable gap.
\end{remark}

\begin{proof}
From the first equation in \eqref{tiling-with-two} it follows that $f(x) \le 1$ for almost every $x$. For the second equation to be true it therefore follows that for almost every $x \in \RR$ there are at least $\Ceil{1/\alpha}$ different values of $n \in \ZZ$ such that $f(x-n\alpha)>0$.
Using this for almost all $x \in [0, \alpha)$ (which ensures that for different $x$ the locations $x-n\alpha$ are also different) gives \eqref{1d-lb}.
\end{proof}

\begin{question}
What is the least possible length of the support of $f$ for a nonnegative $f$ that tiles with both $\ZZ$ and $\alpha\ZZ$?
\end{question}

\begin{remark}
\textit{(Added in revision.)}
In \cite{etkind2022functions} the authors obtain sharp results on the measure of the support of a function which tiles the real line simultaneously by translation with respect to two arithmetic progressions $\alpha\ZZ$ and $\beta\ZZ$. In particular if $f$ is nonnegative, then for $\alpha, \beta$ linearly independent over the rationals the smallest measure of the support is $\alpha+\beta$, which is attained if (but not only if) $f$ is the convolution tile $\one_{[0, \alpha]} * \one_{[0, \beta]}$. On the other hand for rationally dependent $\alpha, \beta$ the smallest measure of the support is less than $\alpha+\beta$.
\end{remark}

\subsection{Allowing for lattices with many relations}\label{sec:many-relations}

If we have $N$ lattices
$$
\Lambda_1, \ldots, \Lambda_N \subseteq \RR^d
$$
we can find a function that tiles with them all, namely the function $f$ in \eqref{convolution}. If our lattices are assumed to each have a fundamental domain bounded by $\sim 1$ then $\diam\supp f = O(N)$, and this cannot be improved for functions $f$ arising from \eqref{convolution}. We show here that we can choose the lattices $\Lambda_j$ so that a common tiling function exists which is much more tight than that, tighter even than what Theorem \ref{kw-lb} imposes. Of course our lattices will not satisfy the genericity condition \eqref{no-intersection} of Theorem \ref{kw-lb}, but will satisfy a lot of relations (their intersection will be a large lattice, in terms of density).

Fix a large prime $p$ and consider the group $\ZZ_p^d$. Any nonzero element $g$ of this group generates a cyclic subgroup of order $p$. It follows that $\ZZ_p^d$ has
$$
\frac{p^d-1}{p-1} \sim p^{d-1} =: N
$$
different cyclic subgroups. For each such subgroup $G$, which we now view as a subset of $\Set{0, 1, \ldots, p-1}^d$, consider the lattice
$$
\Lambda_G = (p\ZZ)^d + G,
$$
which contains the lattice $\Lambda = (p\ZZ)^d$ and has volume
$$
\vol \Lambda_G = \frac{\vol (p\ZZ)^d}{\Abs{G}} = p^{d-1} = N.
$$
The function $f = \one_{[0, p)^d}$, $[0,p)^d$ being a fundamental domain of $\Lambda$, tiles with $\Lambda$ and, therefore, with any larger group, so $f$ is a common tile of all $\Lambda_G$.

In order to make the volume of the $\Lambda_G$ equal to 1 we shrink everything by $N^{1/d}$:
$$
\Lambda_G' = N^{-1/d} \Lambda_G,\ \ \ f'(x) = f(N^{1/d} x).
$$
So we have $\sim N$ lattices $\Lambda'_G$ of volume 1 and a common tile $f'$ for them with
$$
\diam \supp f' = \diam\supp f \cdot N^{-1/d} = \sqrt{d} \, p N^{-1/d} = \sqrt{d}\, N^{\frac{1}{d-1}-\frac{1}{d}} = \sqrt{d} \, N^{\frac{1}{d(d-1)}}.
$$

We have proved:
\begin{theorem}\label{th:many-relations}
In dimension $d\ge 2$ and for arbitrarily large $N$ we can find $N$ lattices of volume $1$ and a common tile $f$ for them with
$$
\diam \supp f = O_d\left( N^{\frac{1}{d(d-1)}} \right),
$$
and, consequently, with
$$
\Abs{\supp f} = O_d\left(N^{\frac{1}{d-1}}\right).
$$
\end{theorem}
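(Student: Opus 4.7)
The plan is to use lattices with a \emph{large common sublattice}, so that a single fundamental domain of that sublattice serves simultaneously as a tile for all of them. Fix a large prime $p$ and view $\ZZ_p^d$ as a $d$-dimensional vector space over the field $\ZZ_p$. Its one-dimensional subspaces are exactly the cyclic subgroups of order $p$, and an easy count by generators gives
$$
N := \frac{p^d-1}{p-1} \sim p^{d-1}
$$
such subgroups; enumerate them as $G_1, \ldots, G_N$.

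For each $j$, I would lift $G_j$ to $\RR^d$ by viewing it as a subset of $\Set{0,1,\ldots,p-1}^d$ and set $\Lambda_j := (p\ZZ)^d + G_j$. Each $\Lambda_j$ contains the common sublattice $\Lambda := (p\ZZ)^d$ with index $p$, so $\vol \Lambda_j = p^{d-1} = N$. The indicator $f_0 := \one_{[0,p)^d}$ is a fundamental domain of $\Lambda$ and hence tiles with $\Lambda$. Since any lattice $\Lambda_j \supseteq \Lambda$ has dual $\Lambda_j^* \subseteq \Lambda^*$, and $\ft{f_0}$ already vanishes on $\Lambda^*\setminus\Set{0}$, the Fourier criterion from \S\ref{sec:convolution-tiles} shows that $f_0$ tiles with every $\Lambda_j$ (at level $[\Lambda_j:\Lambda]=p$).

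Finally I would rescale to normalize the lattices to volume $1$: put $\Lambda_j' := N^{-1/d}\Lambda_j$ and $f(x) := f_0(N^{1/d}x)$. Then $\supp f \subseteq N^{-1/d}[0,p)^d$, whose diameter is $\sqrt{d}\,p\,N^{-1/d}$; substituting $p \sim N^{1/(d-1)}$ gives
$$
\diam\supp f = O_d\pp{N^{1/(d-1)-1/d}} = O_d\pp{N^{1/(d(d-1))}},
$$
and enclosing $\supp f$ in a cube of this diameter yields $\Abs{\supp f} = O_d(N^{1/(d-1)})$. There is no serious obstacle in the argument; the real content is the combinatorial observation that $\ZZ_p^d$ carries on the order of $p^{d-1}$ distinct cyclic subgroups, vastly more than $p$ when $d \ge 2$. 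This family dodges the $N^{1/d}$ lower bound of Theorem \ref{kw-lb} precisely because the genericity condition \eqref{no-intersection} fails spectacularly: one has $\Lambda_j \cap \Lambda_k \supseteq (p\ZZ)^d$ for every pair $j,k$.
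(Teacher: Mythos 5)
Your proposal is correct and follows essentially the same route as the paper: the same family of lattices $(p\ZZ)^d + G$ built from the $\sim p^{d-1}$ cyclic subgroups of $\ZZ_p^d$, the same common tile $\one_{[0,p)^d}$ (the paper simply notes that a tile for $(p\ZZ)^d$ tiles with any larger lattice, which is equivalent to your dual-lattice/Fourier justification), and the same rescaling and diameter computation.
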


\begin{question}
Derive a lower bound for $\diam \supp f$, for $f$ tiling with $\Lambda_1, \ldots, \Lambda_N \subseteq \RR^d$ and with $f \ge 0$ (or just $\int f > 0$) under no algebraic conditions for the lattices $\Lambda_j$, assuming only that $\vol\Lambda_j \sim 1$.
\end{question}

The construction that we used to prove Theorem \ref{th:many-relations} gives nothing in dimension $d=1$. Yet, we can prove that, if we allow relations among the lattices, we can achieve $\diam\supp f = o(N)$ in dimension 1 as well.

Let us start by defining
$$
\lambda_j = \frac{1}{N+j},\ \ \ \Lambda_j = \lambda_j\ZZ, \ \ \ (j=1, 2, \ldots, N).
$$
We will first construct a function $f$ which tiles with all the $\Lambda_j$, $j=1, 2, \ldots, N$, such that
$$
\diam\supp f = o(1).
$$
The Fourier transform of such an $f$ must vanish on the dual lattices
$$
\Lambda_j^* = \lambda_j^{-1}\ZZ = (N+j)\ZZ,\ \ \ (j = 1, 2, \ldots, N)
$$
except at 0.
Write
$$
U = \bigcup_{j=1}^N (N+j)\ZZ \ \setminus\Set{0}.
$$
By a result of Erd\H os \cite{erdos1935note} $U$, the set of integers which are divisible by one of the integers in $\Set{N+1, N+2, \ldots, 2N}$, has density tending to 0 with $N$. Tenenbaum \cite{tenenbaum1980lois} has given the estimate that this density is at most
\beql{ten}
\frac{1}{\log^{\delta - o(1)} N},
\eeq
where $\delta = 0.086071\cdots$ is an explicit constant.

It is an important result of Beurling \cite{beurling1989collected} that if $\Lambda$ is a uniformly discrete set of real numbers of upper density $\rho$ then for any $\epsilon>0$ we can find a continuous function $f$, not identically zero, supported by the interval $[0, \rho+\epsilon]$ such that $\ft{f}(\lambda) = 0$ for all $\lambda \in \Lambda$. We can even ask that $\ft{f}(0) = 1$ if $0 \notin \Lambda$. By Tenenbaum's estimate \eqref{ten} we can take $\rho = \log^{-\delta+o(1)}N$ and the set $U$, being a set of integers and thus uniformly discrete, satisfies the assumptions of Beurling's theorem, so there is a function $f$ supported in the interval $[0, \log^{-\delta+o(1)}N]$, with integral 1, such that $\ft{f} = 0$ on $U$. It follows that $f$ tiles with all $\Lambda_j$.

We now scale by a factor of $N$
$$
f'(x) = f(x / N),\ \ \ \Lambda_j' = N \Lambda_j,\ \ \ \diam\supp f' = O(N \log^{-\delta+o(1)}N)
$$
and obtain the first half of the following theorem.
\begin{theorem}\label{th:many-relations-1d}
We can find $N$ lattices $\Lambda_j \subseteq \RR$ of with $\vol\Lambda_j \sim 1$ and a function $f$ with $\int f > 0$ and supported in an interval of length
$$
\frac{N}{\log^{\delta-o(1)}N)}
$$
which tiles with all $\Lambda_j$.

Furthermore, for any $\epsilon > 0$ any such function $f$ must have
$$
\diam\supp f \gtrsim_\epsilon N^{1-\epsilon}.
$$
\end{theorem}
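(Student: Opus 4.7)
My plan is to complement the Beurling-based upper bound with a matching lower bound derived from Jensen's formula on the Fourier side, combined with the nontrivial lower bound on the density of integers having a divisor in $[N+1,2N]$ supplied by the Erd\H{o}s multiplication table theory.

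First I would rescale back to the setting of the construction. If $f\in L^1(\RR)$ with $\int f > 0$ tiles with each of the scaled lattices $\Lambda_j$, set $g(x)=f(Nx)$; then $g$ tiles with the unscaled lattice $\tfrac{1}{N+j}\ZZ$ and $\diam\supp f = N\diam\supp g$, so it suffices to show $\diam\supp g \gtrsim_\epsilon N^{-\epsilon}$. As in the construction above, tiling with $\tfrac{1}{N+j}\ZZ$ is equivalent to $\ft g$ vanishing on the dual lattice $(N+j)\ZZ \setminus \{0\}$, so $\ft g$ must vanish on the entire set
\[
U \;=\; \bigcup_{j=1}^{N}(N+j)\ZZ \,\setminus\, \{0\}.
\]

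Next I would apply Jensen's formula. Writing $L=\diam\supp g$, the function $\ft g$ is entire of exponential type at most $\pi L$ and $\ft g(0) = \int g \neq 0$. Jensen gives that the number of zeros of $\ft g$ in the disk $\{|z|\leq R\}$ is bounded by $cLR + O(\log R)$ for an absolute constant $c$, and since the points of $U\cap[-R,R]$ are all real zeros of $\ft g$ sitting inside this disk, this translates into the density estimate $d(U)\leq c'L$. The final ingredient is a lower bound on $d(U)$, and here I would invoke the sharp theorem of Ford (refining Tenenbaum) on the Erd\H{o}s multiplication table problem, which gives $d(U) \asymp (\log N)^{-\delta}(\log\log N)^{-3/2}$ with the same $\delta$ appearing in~\eqref{ten}. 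For every $\epsilon>0$ and $N$ sufficiently large this yields
\[
L \;\gtrsim\; d(U) \;\gtrsim\; (\log N)^{-\delta-o(1)} \;\gtrsim_\epsilon\; N^{-\epsilon},
\]
and rescaling back produces the required $\diam\supp f \gtrsim_\epsilon N^{1-\epsilon}$.

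The main obstacle, in my view, is the appeal to the nontrivial lower bound on $d(U)$: the trivial observation $(N+1)\ZZ \subseteq U$ only gives $d(U)\geq 1/(N+1)$, which is useless here. The lower-bound half of the multiplication table theorem is considerably deeper than the upper bound~\eqref{ten} used in the construction; on the other hand, any lower bound of the form $d(U) \geq (\log N)^{-C}$ already suffices for the qualitative conclusion $d(U)\gtrsim_\epsilon N^{-\epsilon}$, so older and cruder forms of the Erd\H{o}s estimate also do the job. A minor technicality is that Jensen's formula counts complex zeros in disks rather than real zeros on intervals, but the inclusion $[-R,R]\subset\{|z|\leq R\}$ automatically makes the disk bound an upper bound on the real zero count of $\ft g$ on $[-R,R]$.
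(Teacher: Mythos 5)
Your proposal is correct, and for the ``furthermore'' part it takes a genuinely different route from the paper. Both arguments reduce the lower bound to counting the zeros forced on $\ft f$ (equivalently $\ft g$ after your rescaling) via Jensen's formula, and on this analytic side your steps are fine: compact support may be assumed, $\ft g(0)=\int g>0$, and the zero count in $\Abs{z}\le R$ is $\le cLR+O(1)$. The difference is the number-theoretic input. The paper stays at the scale $\vol\Lambda_j\sim 1$ and counts the \emph{distinct} points of the union of the $N$ dual progressions $\lambda_j^{-1}\ZZ$ inside the fixed window $[0,2N]$, invoking the theorem of \cite{gilboa2014union} that a union of $N$ arithmetic progressions with pairwise distinct differences has $\ge c_\epsilon N^{2-\epsilon}$ points there; Jensen at radius $\sim N$ then gives $\diam\supp f\gtrsim_\epsilon N^{1-\epsilon}$ for \emph{any} choice of $N$ distinct one-dimensional lattices of volume $\sim 1$, not just the constructed ones. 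You instead exploit the specific arithmetic of the constructed lattices: the zeros contain the full set of multiples $U=\bigcup_{j=1}^N(N+j)\ZZ\setminus\Set{0}$, whose natural density is bounded below by the Erd\H os--Tenenbaum--Ford estimates, and letting $R\to\infty$ yields $L\gtrsim d(U)$. This buys a sharper conclusion for these particular lattices, namely $\diam\supp f\gtrsim N(\log N)^{-\delta}(\log\log N)^{-3/2}$, which matches the Beurling upper bound up to $(\log N)^{o(1)}$ and thus shows the construction is essentially optimal --- a stronger statement than the paper proves --- at the cost of losing the paper's generality over arbitrary lattice families and of invoking a deep theorem. Note also that you do not actually need Ford or even Tenenbaum here: restricting to multiples of the \emph{primes} in $(N,2N]$ and using Mertens plus a Bonferroni bound on the pairwise terms already gives $d(U)\gg 1/\log N$, which suffices for $N^{1-\epsilon}$ and makes your route nearly self-contained.
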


Arguing similarly we can also prove the lower bound for $\diam\supp f$ in Theorem \ref{th:many-relations-1d}. If we assume that $f$ tiles with all $\Lambda_j = \lambda_j \ZZ$, with, say, $1 \le \lambda_j \le 2$, $j=1, 2, \ldots, N$, then $\ft{f}$ vanishes on
$$
\bigcup_{j=1}^N \lambda_j^{-1}\ZZ\ \ \setminus \Set{0}.
$$
If this set is large then Jensen's formula implies that $\diam\supp f$ is also large.
It was proved in \cite[Theorem 1.1, special case $\ell= n$]{gilboa2014union} that, for any $\epsilon > 0$, the above union of arithmetic progressions contains at least $c_\epsilon N^{2-\epsilon}$ points in $[0, 2N]$. By Jensen's formula then we have $\diam\supp f \gtrsim_\epsilon N^{1-\epsilon}$ and this completes the proof of Theorem \ref{th:many-relations-1d}.

\begin{question}
Can we ensure $f \ge 0$ in the first half of Theorem \ref{th:many-relations-1d}?
\end{question}

\section{The common fundamental domain problem without measurability}\label{sec:no-measurability}

In \cite{kolountzakis1997multi} the following theorem was proved in \S 3.2.
\begin{theorem}\label{multi-fd}
If $\Lambda_0, \ldots, \Lambda_n$ are lattices in $\RR^d$ {\em of the same volume} and with the sum $\Lambda_0 + \Lambda_1 + \cdots + \Lambda_n$ being direct then there is a bounded common fundamental domain $F$ for all these lattices.
\end{theorem}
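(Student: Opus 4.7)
The plan is to start from a bounded fundamental parallelepiped $D_0$ of $\Lambda_0$, of volume $c = \vol \Lambda_0$, and to build $F$ in the form
\[
F = \{x + \tau(x) : x \in D_0\},
\]
where $\tau : D_0 \to \Lambda_0$ is a function taking values in a finite subset $S$ of $\Lambda_0$. Since $\tau(x) \in \Lambda_0$ for every $x$, the set $F$ is automatically a fundamental domain of $\Lambda_0$, and it is bounded because $F \subseteq D_0 + S$. The remaining task is to choose $\tau$ so that, for every $j = 1, \ldots, n$, the map $x \mapsto x + \tau(x) \bmod \Lambda_j$ is a bijection from $D_0$ onto $\RR^d / \Lambda_j$; this is exactly the condition that $F$ also be a fundamental domain of $\Lambda_j$.

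The equal-volume hypothesis makes each quotient torus $\RR^d / \Lambda_j$ have the same volume $c$ as $D_0$, so each of the $n$ constraints is a balanced transportation problem. The direct-sum hypothesis forces $\Lambda_0 \cap \Lambda_j = \{0\}$ for every $j$, so the image of $\Lambda_0$ in $\RR^d / \Lambda_j$ is a dense subgroup and there are plenty of candidate shifts with which to redistribute mass from where $D_0$ over-covers the torus to where it under-covers. I would produce $\tau$ by a Hall-type marriage argument: fix a large bounded box $B \subset \RR^d$, set $S = \Lambda_0 \cap B$, partition $D_0$ and each $\RR^d/\Lambda_j$ into fine cells of equal volume, and set up a bipartite matching in which each cell of $D_0$ is assigned a shift in $S$ so that the images in all $n$ tori fill up uniformly. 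Hall's condition, applied to a finite set of source cells $A$, reduces for each $j$ to a volume inequality of the form $\Abs{(A + S) \bmod \Lambda_j} \ge \Abs{A}$, and this follows from a Brunn--Minkowski-type lower bound on $\Abs{A + S}$ once $\Abs{S}$ is large enough relative to $c$. A standard compactness argument as the discretization is refined, combined with the axiom of choice, then produces a (in general non-measurable) $\tau : D_0 \to S$ satisfying all $n$ bijectivity conditions simultaneously.

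The main obstacle is verifying Hall's condition simultaneously across all $n$ tori using shifts drawn from a single bounded set $S$. The direct-sum hypothesis is exactly what precludes the algebraic coincidences between different $\Lambda_j$'s that could otherwise produce a finite collection of source cells with insufficient joint neighborhood in one of the tori, while the equal-volume hypothesis balances supply and demand at every scale; dropping either hypothesis would obstruct the matching already in the case $n = 1$. Keeping $\Abs{B}$ uniformly bounded in $n$, or controlling its growth if one instead proceeds inductively by adjoining the $\Lambda_j$ one at a time, is the most delicate technical point and would be handled by a uniform quantitative form of the volume estimate used in the Hall check.
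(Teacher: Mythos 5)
There is a genuine gap at the heart of your argument, and it is exactly the point you flag as ``the main obstacle'': the simultaneity over the $n$ lattices $\Lambda_1,\dots,\Lambda_n$. Hall's marriage theorem is a statement about a single bipartite structure, whereas you need one assignment of shifts whose induced maps are bijections onto all $n$ tori at once; that is an $(n+1)$-partite constraint system, for which no Hall-type criterion exists (already for three subgroups of equal index in a group, a common transversal can fail to exist). Checking, for each $j$, an inequality of the form $\Abs{(A+S)\bmod \Lambda_j}\ge\Abs{A}$ only yields, for each $j$ \emph{separately}, a rearrangement of $D_0$ that tiles with $\Lambda_j$; it gives no single $\tau$ working for every $j$, and your proposal offers no mechanism for the joint constraint beyond the assertion that directness precludes obstructions. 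This is precisely where the direct-sum hypothesis must be used constructively, and it is how the argument of \cite{kolountzakis1997multi} goes: one first reduces to finding a bounded common fundamental domain inside the countable group $G=\Lambda_0\oplus\Lambda_1\oplus\cdots\oplus\Lambda_n$ (and then transports it to $\RR^d$ by choosing, via the axiom of choice, representatives of the $G$-cosets inside a bounded fundamental domain of $\Lambda_0$). Inside $G$, because the sum is direct, a common fundamental domain is the same thing as a map $\phi:\Lambda_1\times\cdots\times\Lambda_n\to\Lambda_0$ which is a bijection onto $\Lambda_0$ in each variable separately and for which $\lambda_1+\cdots+\lambda_n+\phi(\lambda_1,\dots,\lambda_n)$ stays bounded; and such a $\phi$ can simply be taken of the form $\phi=\sigma_1(\lambda_1)+\cdots+\sigma_n(\lambda_n)$, where each $\sigma_j:\Lambda_j\to\Lambda_0$ is a bijection with $\lambda+\sigma_j(\lambda)$ bounded. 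Only these \emph{pairwise} bounded-displacement bijections are produced by a marriage-theorem argument (there the equal covolume is what gives Hall's condition, via lattice point counting); the multi-lattice difficulty is dissolved by the direct-sum decoupling, not by a joint matching.

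Two further points. First, even for $n=1$ your discretize-and-refine scheme does not obviously produce an exact fundamental domain: the set $F$ must meet \emph{every} coset exactly once, not almost every coset, and a limit of matchings of finite cell decompositions is inherently a measure-theoretic object; it is unclear in which topology the limit is taken and why it converges to a pointwise bijection, given that the object to be built is in general non-measurable (in the cited proof it is produced directly on the countable group by choice, not as a limit of discretizations). Second, your claim that the image of $\Lambda_0$ in $\RR^d/\Lambda_j$ is dense does not follow from the hypotheses: density is equivalent to $\Lambda_0^*\cap\Lambda_j^*=\Set{0}$, and one can have two unimodular lattices in $\RR^3$ with $\Lambda_0\cap\Lambda_1=\Set{0}$ whose duals share a nonzero vector (take $\Lambda_0=\ZZ^3$ and $\Lambda_1=K+\ZZ w$ with $K$ an irrational rotation of $\ZZ^2$ inside the plane $x_3=0$ and $w=(x,y,1)$ generic). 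This does not by itself destroy a counting argument, but it shows that the heuristic ``directness rules out algebraic coincidences'' cannot be what carries the proof.
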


\begin{remark}
No measurability is claimed for $F$ and the set $F$ constructed in \cite{kolountzakis1997multi} is not measurable.
\end{remark}

The question was left open in \cite{kolountzakis1997multi} whether the equal volume assumption was necessary. This assumption is obviously necessary if we ask for a measurable tile as the volume of the tile equals the volume of each lattice it tiles with. But there is no a priori reason for this requirement to hold if we cannot measure volumes, as with tiles that are not necessarily measurable. But, we show here that, indeed it is necessary, by giving a pair of lattices, with no common elements, which have different volumes and have no bounded common fundamental domain.
\begin{theorem}\label{th:equal-volume}
Let $\Lambda_1 = \ZZ^d$ and $\Lambda_2 = \alpha\ZZ^d$, with $\alpha$ irrational and $d \ge 1$. Then there is no bounded set $F \subseteq \RR^d$ which consists of exactly one representative from each coset of each $\Lambda_i$, $i=1,2$.
\end{theorem}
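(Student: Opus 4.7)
The plan is to exploit the double-tiling structure of the common fundamental domain $F$ by constructing, for any fixed base point $f_0\in F$, a bijection $\psi\colon\ZZ^d\to\ZZ^d$ that is almost norm-preserving. Counting lattice points in large balls will then force $\alpha^d=1$, contradicting the irrationality of $\alpha$.

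Fix $R>0$ with $F\subseteq B_R(0)$, and pick any $f_0\in F$ (this is possible since $F$ must meet every $\ZZ^d$-coset). For each $n\in\ZZ^d$, the point $f_0+n\in\RR^d$ has a unique representative $g_n\in F$ modulo $\alpha\ZZ^d$, so we may write $f_0+n=g_n+\alpha\psi(n)$ with $\psi(n)\in\ZZ^d$. Injectivity of $\psi$: if $\psi(n_1)=\psi(n_2)=m$, the two elements $f_0+n_i-\alpha m\in F$ differ by $n_1-n_2\in\ZZ^d$, forcing $n_1=n_2$ because $F$ is a fundamental domain for $\ZZ^d$. Surjectivity: given $m\in\ZZ^d$, let $g\in F$ be the unique $F$-representative of the $\ZZ^d$-coset of $f_0-\alpha m$ and set $n=g+\alpha m-f_0\in\ZZ^d$; then $f_0+n-\alpha m=g\in F$, so $\psi(n)=m$.

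From the identity $\alpha\psi(n)=f_0+n-g_n$ and the bounds $|f_0|,|g_n|\le R$ we read off
\[
|\psi(n)|\le \frac{|n|+2R}{\alpha},\qquad |\psi^{-1}(m)|\le \alpha\,|m|+2R,
\]
where we may assume $\alpha>0$ since $\alpha\ZZ^d=(-\alpha)\ZZ^d$. Writing $N(M):=|\ZZ^d\cap B_M(0)|$, the injectivity of $\psi$ and of $\psi^{-1}$ on balls gives
\[
N(M)\le N\!\left(\tfrac{M+2R}{\alpha}\right)\quad\text{and}\quad N(M)\le N(\alpha M+2R).
\]
Since $N(M)=c_d\,M^d(1+o(1))$ as $M\to\infty$, dividing by $M^d$ and passing to the limit yields $\alpha^d\le 1$ and $\alpha^d\ge 1$, hence $\alpha=1$, contradicting irrationality.

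The delicate conceptual point is the setup. The pairing $F\times\ZZ^d\to F\times\ZZ^d$ coming from the two tilings is a bijection, but it carries no useful cardinality information when $F$ is uncountable. Restricting it to a single fiber $\{f_0\}\times\ZZ^d$ extracts a genuine self-bijection of the countable lattice $\ZZ^d$, which is what lets the argument avoid any appeal to measurability of $F$ or to finitely-additive invariant measures. Once $\psi$ is in hand, the norm bounds and lattice-point counting are automatic.
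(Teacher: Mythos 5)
Your proof is correct, and the underlying mechanism is the same as the paper's: the two fundamental-domain conditions together with the boundedness of $F$ force an injective correspondence between the $\ZZ^d$-indices and the $\alpha\ZZ^d$-indices with norm distortion $\alpha$ up to an additive constant, and counting lattice points in a large window then contradicts $\alpha \neq 1$. Where you differ is in the setup: the paper first reduces to a common fundamental domain inside the countable group $\Lambda_1+\Lambda_2$ (citing the reduction from \cite{kolountzakis1997multi}), assumes $\alpha>1$ without loss of generality, writes $F=\Set{m-n_m\alpha : m \in \ZZ}$ and performs a one-sided pigeonhole count, writing the details only for $d=1$; you instead anchor at a single point $f_0\in F$ and extract a genuine self-bijection $\psi$ of $\ZZ^d$ directly in $\RR^d$, which makes the argument self-contained (no appeal to the reduction step), valid verbatim in every dimension, and symmetric, yielding both $\alpha^d\le 1$ and $\alpha^d\ge 1$ instead of a one-sided contradiction. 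Your closing remark about restricting the pairing to the fiber over $f_0$ correctly isolates the only delicate point: no measurability of $F$, and no cardinality information about $F$ itself, is ever used.
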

\begin{proof}
We give the proof in dimension $d=1$ for clarity, as it is essentially the same for all $d$. Without loss of generality we take $\alpha > 1$.

As explained in \cite[Proof of Theorem 1]{kolountzakis1997multi} it is enough to show that $\Lambda_1$ and $\Lambda_2$ do not have a bounded common fundamental domain in the group
$$
G = \Lambda_1 + \Lambda_2 = \Set{m+n\alpha: m, n \in \ZZ}.
$$
Suppose $F$ is just such a bounded common fundamental domain
$$
F = \Set{m_i - n_i \alpha: i=1, 2, \ldots} \subseteq [-M, M].
$$
For $F$ to be a fundamental domain it must contain precisely one point in the set $k+\alpha\ZZ$ and one point in the set $\ZZ+k\alpha$, for all $k \in \ZZ$. By renumbering then we can write
$$
F = \Set{m - n_m\alpha: m\in\ZZ}.
$$
Let now $R>0$ be large and consider the following set of values for $m$:
\beql{m-range}
-R \le m \le R.
\eeq
For such values of $m$ and from the presumed bound
$$
\Abs{m-n_m \alpha} \le M
$$
we have the bounds
\beql{n-range}
-\frac{R+M}{\alpha} \le n_m \le \frac{R+M}{\alpha}.
\eeq
As $R\to\infty$ the number of values of $m$ allowed by \eqref{m-range} are $\sim 2R$ in number. The number of the corresponding values of $n$ allowed by \eqref{n-range} is $\sim 2R/\alpha$, which is strictly smaller if $R$ is large since we have taken $\alpha > 1$. This is a contradiction as there must be exactly one $n_m$ for each $m$ and all the $n_m$'s are different.
\end{proof}

\section{The problem in finite abelian groups}\label{sec:finite-groups}

Suppose we have a finite abelian group $G$ and two subgroups $G_1, G_2$ of the same index
$$
n = [G:G_1] = [G:G_2].
$$
We can ask whether we can find a common fundamental domain for $G_1, G_2$ in $G$, i.e. a set of $n$ elements
$$
F = \Set{g_1, \ldots, g_n}
$$
which tiles with both $G_1$ and $G_2$. This always exists, even in the non-abelian case, if properly defined, see e.g.\ \cite{button2014}.

If we drop the equal index assumption we can still ask for a \textit{function} $f$ defined on $G$ which tiles with both subgroups:
\beql{group-tiling}
\forall x \in G:\ \ \sum_{g_1 \in G_1} f(x-g_1) = \Abs{G_1},\ \ \sum_{g_2 \in G_2} f(x-g_2) = \Abs{G_2}.
\eeq
The question we are interested in here is:
\begin{quote}
Given $G, G_1, G_2$ how small can the size of the support of $f$ be?
\end{quote}

Under the assumption of equal index the answer to the above question is that, since we can find a common fundamental domain of $G_1, G_2$ in $G$ \cite{kolountzakis1997multi}, the size of the support of $f$ can be as small as $[G:G_1] = [G:G_2]$. Of course it cannot be smaller than that. But once we drop the equal index assumption then the only general construction we know is the convolution of the indicator functions of the fundamental domains $D_i$ of $G_i$:
\beql{conv-fd}
f = c \one_{D_1} * \one_{D_2},
\eeq
which gives, with $c = \frac{\Abs{G}}{\Abs{D_1} \cdot \Abs{D_2}}$,
$$
f*\one_{G_1} = c \one_{D_2} * \one_{D_1}* \one_{G_1} = c \one_{D_2} * \one_G = c \Abs{D_2} = \Abs{G_1},
$$
and similarly for $f*\one_{G_2}$.
But the support of $f$ in \eqref{conv-fd} can be quite large, a priori as large as $\Abs{D_1}\cdot \Abs{D_2}$.

From now on we restrict our functions $f$ to be nonnegative and normalized as shown in \eqref{group-tiling}. We also usually restrict our tiles to be nonnegative functions.

\begin{definition}
If $G_1, G_2$ are subgroups of the finite abelian group $G$ we write
$$
S_{G_1, G_2}^G = \min\Set{\Abs{\supp f}: \text{where } f:G \to \RR^{\ge 0}, f*\one_{G_1} = \Abs{G_1},\ f*\one_{G_2} = \Abs{G_2}}.
$$
\end{definition}

It is always the case that
$$
S_{G_1, G_2}^G \ge [G : G_i],\ \ \ (i=1,2).
$$
Observe that if $G_1, G_2$ have a common fundamental domain $F$ in $G$ then
$$
S_{G_1, G_2}^G = \Abs{F}.
$$

The following result says that we can always restrict our study to the case of $G_1 \cap G_2$ being trivial. In other words we may assume from now on that $G$ is the direct sum of $G_1$ and $G_2$.

\begin{theorem}\label{th:no-intersection}
If $G_1, G_2 \subseteq G$ are finite abelian groups and
$$
\Gamma = G/(G_1 \cap G_2),\ \  \Gamma_i = G_i/(G_1 \cap G_2),\ \ \ (i=1,2)
$$
then
\beql{S-quotient}
S_{G_1, G_2}^G = S_{\Gamma_1, \Gamma_2}^\Gamma.
\eeq
\end{theorem}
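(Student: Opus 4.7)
The plan is to prove both inequalities in \eqref{S-quotient} by explicit constructions that pass functions between $G$ and $\Gamma = G/H$, where $H:=G_1\cap G_2$. Let $\pi:G\to\Gamma$ denote the quotient map, write $\bar x:=\pi(x)$, and choose sections $s_i:\Gamma_i\to G_i$ so that $G_i$ decomposes as the set-theoretic disjoint union of the $H$-cosets $s_i(\gamma_i)+H$, $\gamma_i\in\Gamma_i$ (this is not a group decomposition, since $s_i$ need not be a homomorphism, but the combinatorial identity is all we need).

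For the inequality $S^{\Gamma}_{\Gamma_1,\Gamma_2}\le S^G_{G_1,G_2}$, I would start with an optimal nonnegative $f$ on $G$ and push it down to $\Gamma$ by averaging over the fibers of $\pi$:
$$
\tilde F(\gamma) \;=\; \frac{1}{|H|}\sum_{g\in\pi^{-1}(\gamma)} f(g).
$$
Summing the tiling relation $\sum_{g_i\in G_i} f(x-g_i)=|G_i|$ one $H$-coset of $G_i$ at a time gives
$\sum_{\gamma_i\in\Gamma_i}\tilde F(\bar x-\gamma_i)=|G_i|/|H|=|\Gamma_i|$ for every $\bar x\in\Gamma$ and each $i=1,2$. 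Since $f\ge 0$, we have $\supp\tilde F\subseteq\pi(\supp f)$, so $|\supp\tilde F|\le|\supp f|$.

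For the reverse inequality $S^G_{G_1,G_2}\le S^{\Gamma}_{\Gamma_1,\Gamma_2}$, I would fix a set $D\subseteq G$ of coset representatives for $H$ in $G$ and lift an optimal $\tilde f$ on $\Gamma$ to
$$
f(g) \;=\; |H|\,\tilde f(\pi(g))\,\one_D(g).
$$
The key combinatorial observation is that for every $x\in G$ and every coset $s_i(\gamma_i)+H$ inside $G_i$, the translate $x-(s_i(\gamma_i)+H)$ is an $H$-coset of $G$, so it meets $D$ in exactly one element $g_i$, and for that $g_i$ one has $\pi(x-g_i)=\bar x-\gamma_i$. Summing over $\gamma_i\in\Gamma_i$ yields $\sum_{g_i\in G_i}f(x-g_i)=|H|\cdot|\Gamma_i|=|G_i|$, as required. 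Because $\pi$ restricted to $D$ is a bijection onto $\Gamma$, the supports of $f$ and $\tilde f$ have the same size.

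The only point requiring care is the coset bookkeeping on both sides: in each direction, the tiling identity reduces to the fact that summing over $G_i$ factors as summing over $\Gamma_i$ and then over $H$, and the factor $|H|$ in the two constructions is exactly what reconciles the tiling levels $|G_i|$ and $|\Gamma_i|$. Once the bookkeeping is set up correctly, I do not foresee any substantive obstacle; the result is essentially a clean reorganization of the tiling sums with respect to the decomposition $G_i = s_i(\Gamma_i)+H$.
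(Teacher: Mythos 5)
Your proof is correct and follows essentially the same route as the paper: push $f$ down to $\Gamma$ by averaging over $H$-cosets (with nonnegativity giving $\lvert\supp \tilde F\rvert\le\lvert\supp f\rvert$ directly, which neatly sidesteps the paper's ``collect the mass to one point per coset'' minimality step), and lift $\tilde f$ back to $G$ by concentrating it, scaled by $\lvert H\rvert$, on a transversal $D$ of $H$. The only blemish is notational: the unique element of $D$ met by the coset $x-(s_i(\gamma_i)+H)$ is $x-g_i$ for a unique $g_i\in s_i(\gamma_i)+H\subseteq G_i$, rather than being the element $g_i$ itself, but the intended bookkeeping and the resulting identity $\sum_{g_i\in G_i}f(x-g_i)=\lvert H\rvert\,\lvert\Gamma_i\rvert=\lvert G_i\rvert$ are correct.
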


\begin{proof}
In what follows $\Gamma$ or $\Gamma_i$ may denote either the quotient group or an arbitrary but fixed set of coset representatives of the subgroup $G_1 \cap G_2$.

Let $f:G \to \RR^{\ge 0}$ satisfy
$$
\Abs{G_i} = f*\one_{G_i} = (f * \one_{G_1 \cap G_2}) * \one_{\Gamma_i},\ \ \  (i=1,2).
$$
If $\supp f$ is minimal it follows from the above representation that $\supp f$ has at most one point in each $(G_1\cap G_2)$-coset, as we can always collect all the ``mass'' of $f$ contained in one coset to one point of the coset without affecting $f * \one_{G_1 \cap G_2}$.

Define $F:\Gamma \to \RR^{\ge 0}$ by
$$
F(\gamma) = \frac{1}{\Abs{G_1 \cap G_2}} \sum_{g \in G_1 \cap G_2} f(\gamma + g) =
 \frac{1}{\Abs{G_1 \cap G_2}} f*\one_{G_1 \cap G_2}(\gamma).
$$
It follows, under the assumption that $\Abs{\supp f}$ is minimal, that
$$
\Abs{\supp F} = \Abs{\supp f}.
$$
We also have, for $i=1,2$,
$$
F*\one_{\Gamma_i} = \frac{1}{\Abs{G_1 \cap G_2}} f*\one_{G_1 \cap G_2}*\one_{\Gamma_i} = \frac{1}{\Abs{G_1 \cap G_2}} f*\one_{G_i} = \frac{\Abs{G_i}}{\Abs{G_1 \cap G_2}}  = \Abs{\Gamma_i}.
$$
It follows that
$$
S_{\Gamma_1, \Gamma_2}^\Gamma \le S_{G_1, G_2}^G.
$$
To prove the reverse inequality we start with a function $F:\Gamma\to\RR^{\ge 0}$ satisfying
$$
F*\one_{\Gamma_i} = \Abs{\Gamma_i},\ \ \ (i=1,2)
$$
and define $f:G \to \RR^{\ge 0}$ by taking $f(x)$ to be
$$
\Abs{G_1 \cap G_2} \cdot F(x + G_1\cap G_2)
$$
at precisely one point $x$ in each $(G_1 \cap G_2)$-coset, and in all other points of the coset we take it to be 0.

It follows that $\Abs{\supp f} = \Abs{\supp F}$,
and we also have, for $i=1,2$, and viewing $F$ as a function on $G$ (constant on $(G_1 \cap G_2)$-cosets),
$$
f*\one_{G_i} = f*\one_{G_1 \cap G_2} * \one_{\Gamma_i} = 
 \Abs{G_1 \cap G_2} \cdot F* \one_{\Gamma_i} = \Abs{G_1 \cap G_2} \cdot \Abs{\Gamma_i} = \Abs{G_i}.
$$
This concludes the proof of the reverse inequality and the Theorem.

\end{proof}

Let $G = G_1 \times G_2$ from now on. The following theorem is the best understood case.

\begin{theorem}\label{th:multiple}
If $G=G_1 \times G_2$ and $\Abs{G_1}$ divides $\Abs{G_2}$ then
\beql{multiple}
S_{G_1, G_2}^G = [G:G_1] = \Abs{G_2}.
\eeq
\end{theorem}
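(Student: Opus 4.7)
The plan is to prove equality by matching a simple lower bound with an explicit construction. The lower bound is immediate: if $f:G\to\RR^{\ge 0}$ satisfies $f*\one_{G_1}=\Abs{G_1}$ then for every coset $x+G_1$ we have $\sum_{y\in x+G_1}f(y)=\Abs{G_1}>0$, so each of the $[G:G_1]$ cosets of $G_1$ must contain at least one element of $\supp f$. Since $[G:G_1]=\Abs{G_2}\ge \Abs{G_1}=[G:G_2]$ (using that $\Abs{G_1}$ divides $\Abs{G_2}$), we get $S_{G_1,G_2}^G\ge [G:G_1]=\Abs{G_2}$.

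For the matching upper bound I would build $f$ as a scaled indicator of a carefully chosen set $S\subseteq G=G_1\times G_2$ of size $\Abs{G_2}$. Let $k=\Abs{G_2}/\Abs{G_1}\in\NN$ and fix any map $\pi:G_2\to G_1$ that is uniformly $k$-to-$1$, i.e.\ $\Abs{\pi^{-1}(g_1)}=k$ for every $g_1\in G_1$; such a $\pi$ exists precisely because $\Abs{G_2}=k\Abs{G_1}$. Define
$$
S=\Set{(\pi(h),h):h\in G_2}\subseteq G_1\times G_2,\qquad f=\Abs{G_1}\cdot\one_{S}.
$$
By construction $S$ meets each $G_1$-coset $\Set{(\ast,b):\ast\in G_1}$ in the single point $(\pi(b),b)$, and it meets each $G_2$-coset $\Set{(a,\ast):\ast\in G_2}$ in exactly the $k$ points indexed by $\pi^{-1}(a)$.

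The verification is then a one-line check on each coset. For any $x=(a,b)\in G$,
$$
f*\one_{G_1}(x)=\Abs{G_1}\cdot\Abs{S\cap(x+G_1)}=\Abs{G_1}\cdot 1=\Abs{G_1},
$$
$$
f*\one_{G_2}(x)=\Abs{G_1}\cdot\Abs{S\cap(x+G_2)}=\Abs{G_1}\cdot k=\Abs{G_2},
$$
so $f$ is an admissible common tile with $\Abs{\supp f}=\Abs{S}=\Abs{G_2}$, proving $S_{G_1,G_2}^G\le \Abs{G_2}$ and completing the proof.

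There is no real obstacle here: the divisibility $\Abs{G_1}\mid\Abs{G_2}$ is exactly the combinatorial hypothesis that allows the existence of a uniformly $k$-to-$1$ map $G_2\to G_1$, which is what makes the two tiling conditions simultaneously satisfiable by a single ``graph-of-$\pi$'' transversal. The only minor subtlety worth noting is that the proof of the lower bound uses only the $G_1$-tiling condition (picking the larger of the two indices $[G:G_1]$ and $[G:G_2]$), so no symmetric argument between $G_1$ and $G_2$ is needed.
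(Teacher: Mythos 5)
Your proof is correct and is essentially the same as the paper's: the lower bound $S_{G_1,G_2}^G\ge[G:G_1]=\Abs{G_2}$ is the general coset-counting bound already noted in the paper, and your ``graph of a uniformly $k$-to-$1$ map $\pi:G_2\to G_1$'' is exactly the paper's construction, which pairs each element of $G_2$ with a corresponding element of $G_1$, each repeated $k$ times. The only difference is notational (a map $\pi$ versus an explicit enumeration), so nothing further is needed.
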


\begin{proof}
Enumerate the two subgroups arbitrarily as
$$
G_1 = \Set{g^1_1, \ldots, g^1_m},\ \ G_2 = \Set{g^2_1, \ldots, g^2_n},
$$
with $n = k m$, $k \ge 1$.
Take $f = \Abs{G_1} \one_F$ where the set $F$ is constructed by taking sums of the elements of $G_2$ with the ``corresponding'' elements of $G_1$ (but the elements of $G_1$ have to be repeated $k$ times each). So $F$ consists of the sums
\begin{align*}
 &\ g^1_1+g^2_1,\  g^1_2+g^2_2, \ldots, g^1_m+g^2_m,\\
 &\ g^1_1+g^2_{m+1},\  g^1_2+g^2_{m+2}, \ldots, g^1_m+g^2_{m+m}\\
 &\ g^1_1+g^2_{2m+1}, \ldots, g^1_m+g^2_{2m+m}\\
 &\ \ldots
\end{align*}
It is easy to see that $F$ is a fundamental domain for $G_1$ and that
$$
f*\one_{G_1} = \Abs{G_1},\ \ \ f*\one_{G_2} = k \Abs{G_1} = \Abs{G_2}.
$$
Finally $\Abs{\supp f} = \Abs{G_2}$, which proves the Theorem as we always have $S_{G_1, G_2}^G \ge \Abs{G_2}$.
\end{proof}

It is clear now that in studying the problem in the group $G = G_1 \times G_2$ the group structure is irrelevant and, writing $m = \Abs{G_1}$ and $n = \Abs{G_2}$, the problem is to find a nonnegative real matrix
$$
A \in (\RR^{\ge 0})^{m \times n}
$$
with row sums all equal to $n$ and column sums all equal to $m$, and with as small a support (non-zero entries) as possible.

Matrices of this type or, rather, the matrices $\frac{1}{mn}A$, and, more generally, multivariate distributions with uniform marginals, are called {\em copulas} in statistics and have been studied extensively \cite{nelsen2007introduction}. They can be used to ``isolate'' the marginal distributions of a general multivariate distribution from the dependence part of distribution.
\begin{definition}
Write $A(m, n)$
for the set of all such $m \times n$ matrices (with nonnegative entries and all row sums equal to $n$ and all column sums equal to $m$) and
$$
S(m, n) = \min\Set{\Abs{\supp A}:\ A \in A(m, n)}.
$$
\end{definition}
In this notation Theorem \ref{th:multiple} says that
$$
S(m, km) = km,\ \ \text{ if } m, k \in \NN.
$$

In what follows we use \cite{caron1996nonsquare}, where the structure of the matrices in $A(m, n)$ of minimal support is described up to permutation of rows and columns (these operations obviously leave $A(m, n)$ unchanged and also do not alter the size of the support of each matrix).

That the situation changes radically if $m$ does not divide $n$ can be seen, for example, by the following.
\begin{lemma}\label{lm:r}
If $k \ge 1$, $1 \le r < m$, then
$$
S(m, km+r) \ge (k+1) m.
$$
If $r=1$ we have
$$
S(m, km+1) = (k+1) m.
$$
\end{lemma}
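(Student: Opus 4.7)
The plan is to prove both inequalities directly from the row- and column-sum constraints defining $A(m,n)$, where $n = km+r$ with $k \ge 1$ and $1 \le r < m$.

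First I would establish the lower bound by a per-row count of nonzero entries. Since every column sum of $A$ equals $m$ and all entries are nonnegative, each individual entry satisfies $A_{ij} \le m$. Any row has sum $n = km + r$, so the number of nonzero entries in that row is at least $\Ceil{(km+r)/m} = k+1$, where the equality uses precisely the hypothesis $1 \le r < m$. Summing over the $m$ rows gives $\Abs{\supp A} \ge (k+1)m$, i.e., $S(m, km+r) \ge (k+1)m$.

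For the matching upper bound in the special case $r=1$, I would exhibit an explicit matrix $A \in A(m, km+1)$ with exactly $(k+1)m$ nonzero entries. Partition the first $km$ columns into $m$ consecutive blocks of size $k$ and pair block $i$ with row $i$. In each cell $(i, j)$ with $j$ lying in block $i$, put the value $m$, and leave every other cell of those first $km$ columns equal to $0$. In the last column, indexed $km+1$, put $1$ in every row. Row $i$ then consists of $k$ entries equal to $m$ together with a single $1$, giving row sum $km+1$; each of the first $km$ columns has a single nonzero entry equal to $m$, and the last column has $m$ entries equal to $1$, so every column sums to $m$. The support size is $km + m = (k+1)m$, matching the lower bound.

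The argument itself presents no real obstacle; it is an entry-counting bound paired with a single concrete construction. The only point worth underlining is why $r=1$ is special for the construction: when $r=1$, the unit of row-deficit above $km$ can be absorbed by a single shared column containing $m$ ones. For $r > 1$ the deficit $r$ per row has to be spread across several shared columns without wasting support, which is more delicate; this is the copula-support problem the paper goes on to highlight, and it is why the matching upper bound in the lemma is restricted to $r=1$.
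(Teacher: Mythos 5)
Your proof is correct and takes essentially the same route as the paper: the lower bound is the same per-row count (entries are at most $m$ because of the column sums, so each of the $m$ rows needs at least $k+1$ nonzero entries), and your explicit matrix for $r=1$ is the paper's construction up to a permutation of columns (the paper places the column of $1$'s first rather than last).
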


\begin{proof}
Column sums are equal to $m$, so none of the entries is $> m$. Since row sums are equal to $km+r$ it follows that we have at least $k+1$ non-zero terms in each of the $m$ rows. This shows the lower bound $S(m, km+r) \ge m(k+1)$. To show the next claim we check that the $m \times (km+1)$ matrix
$$
m \underbrace { \left\{ \ \begin{bmatrix}
1 & \overbrace{m  \cdots  m}^k & \cdots & \cdots \\
1 & \cdots & \overbrace{m  \cdots  m}^k & \cdots \\
\\
\ & \cdots & \cdots & \cdots\\
1 & \cdots & \cdots & \overbrace{m  \cdots  m}^k
\end{bmatrix}\right. }_{km+1}
$$
has constant row and column sums.  Since its support has size $(k+1)m$, which is the minimum possible by the first part of the Lemma, we are done.
\end{proof}

\begin{question}
What is the true value of $S(m, km+r)$ when $1<r<m$?
\end{question}

This question has been completely solved in \cite{loukaki2022doubly} since the first version of this paper appeared.

\bibliographystyle{amsplain}
\bibliography{21095fin}

\end{document}